\newcommand{\N}{{\mathbb{N}}}
\newcommand{\eps}{\varepsilon}
\author{Zavarzina O.O.}
\title{On expand-contract plasticity in quasi-metric spaces}
\shorttitle{Plasticity in quasi-metric spaces}
\institute{Department of Mathematics and Informatics, V.N. Karazin Kharkiv  National University, 61022 Kharkiv, Ukraine}
\email{olesia.zavarzina@yahoo.com}
\thanks{The author is grateful to her scientific advisor Vladimir Kadets for constant help with this project. The research was partially supported  by the  National Research Foundation of Ukraine funded by Ukrainian State budget in frames of project 2020.02/0096 ``Operators in infinite-dimensional spaces:  the interplay between geometry, algebra and topology''. The author was also partially supported by the Volkswagen Foundation grant within the
frameworks of the international project ``From Modeling and Analysis to Approximation'' and Universities for Ukraine (U4U) Non-Residential Fellowship Program.}
\subjclass{54E99}
\begin{document}

\maketitle

%%% ----------------------------------------------------------------------

\section*{Introduction}
Let $X$ be a nonempty set. By a quasi-metric on $X$, we mean a nonnegative
real-valued function $d$ on $X\times X$ such that for $x,y,z\in X$  we have
\begin{itemize}
  \item[(a)] $d(x; x) = 0$,
  \item[(b)] $d(x; z) \leq d(x; y) + d(y; z)$,
  \item[(c)] $d(x; y) = 0 \Rightarrow x = y$.
\end{itemize}
The set  $X$ with a fixed  quasi-metric is called a quasi-metric space.

The concept of quasi-metric is a generalization of the concept of metric. The difference consists in the absence of condition  $d(x; y) = d(y, x)$, that is why a  quasi-metric is informally called an `` asymmetric metric''. The natural question what theorems involving metric spaces remain valid for quasi-metric ones is a natural and popular line of research. An important for us example is \cite{KMrReilVam}, where some questions of convergence and precompactness are regarded from this point of view. More about the theory of quasi-metric spaces, especially about the  asymmetric version of normed spaces and adaptation of functional analysis to the asymmetric case one can find in the monograph \cite{Cob}.

The conjugate of a quasi-metric $d$ on $X$,
denoted by $d^{-1}$, is a quasimetric on $X$, defined by $d^{-1}(x,y) = d(y, x)$ for
$x,y\in X$.
The set
$$B(x,\varepsilon) = \{y\in X : d(x,y) < \varepsilon\}$$
is the d-ball with center $x$ and radius $\varepsilon$.
A map $F: M \to M$  is called non-expansive, if $d(F(x), F(y)) \leq d(x,y)$ for all $x,y \in M$.
\begin{definition}
The quasi-metric space $M$ is called expand-contract  plastic (or simply, an EC-space) if every non-expansive bijection from $M$ onto  itself is an isometry.
\end{definition}

For {\em metric}  spaces, Naimpally, Piotrowski, and Wingler proved the following theorem.
\begin{theorem}[{\cite[Theorem 1.1]{NaiPioWing}}] \label{NPW}
Let $(X, d)$ be a totally bounded metric space, and let $f\colon X\to X$ be a function.
If there exist points $p, q\in X$ such that $d(f (p), f (q)) > d(p, q)$, then there exist points
$r, s\in X$ such that $d(f (r), f (s)) < d(r, s)$.
\end{theorem}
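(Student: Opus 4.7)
I will argue by contrapositive: assume instead that $d(f(x), f(y)) \geq d(x, y)$ for all $x, y \in X$, yet $d(f(p), f(q)) > d(p, q)$ for the given pair; set $\delta := d(f(p), f(q)) - d(p, q) > 0$. The aim is to derive a contradiction by exploiting the total boundedness of $X$.

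\emph{Heart of the argument: approximate recurrence.} The key step is to produce an integer $N \geq 1$ such that $f^N$ moves both $p$ and $q$ only slightly, say
\[
d(p, f^N(p)) < \delta/4 \qquad \text{and} \qquad d(q, f^N(q)) < \delta/4.
\]
Since $X$ is totally bounded, so is $X \times X$ with the product metric, and hence the sequence $\{(f^n(p), f^n(q))\}_{n \geq 0}$ has a Cauchy subsequence. Consequently there exist indices $n < m$ with $d(f^n(p), f^m(p)) < \delta/4$ and $d(f^n(q), f^m(q)) < \delta/4$. Setting $N := m - n$, I apply the non-contractive hypothesis $n$ times to the pair $(p, f^N(p))$: each step can only weakly increase the distance, yielding
\[
d(p, f^N(p)) \leq d(f^n(p), f^{n+N}(p)) = d(f^n(p), f^m(p)) < \delta/4,
\]
and similarly for $q$.

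\emph{Closing the argument.} Iterating non-contractiveness $N-1$ times starting from the pair $(f(p), f(q))$ gives $d(f^N(p), f^N(q)) \geq d(f(p), f(q)) = d(p, q) + \delta$. On the other hand, the triangle inequality combined with the recurrence bound produces
\[
d(f^N(p), f^N(q)) \leq d(f^N(p), p) + d(p, q) + d(q, f^N(q)) < d(p, q) + \delta/2.
\]
These two estimates together force $d(p, q) + \delta \leq d(p, q) + \delta/2$, i.e.\ $\delta \leq \delta/2$, which is absurd.

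\emph{Main obstacle.} The only subtle point is the approximate recurrence step: non-contractiveness has to be used in the somewhat counterintuitive direction of pulling Cauchy behavior of \emph{forward} orbits \emph{backward} to near-fixedness at $p$ and $q$ themselves, since closeness of $f^n(p)$ and $f^m(p)$ forces closeness of their $f^n$-preimages $p$ and $f^{m-n}(p)$ (contractions being forbidden). Once this recurrence is in place, the rest is two triangle-inequality lines. It is exactly this step that will fail in the asymmetric setting the paper investigates, which is presumably why the theorem breaks down for general quasi-metric spaces.
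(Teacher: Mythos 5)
Your proof is correct. The paper itself does not prove this statement but cites it from \cite{NaiPioWing}, and your argument --- extracting a Cauchy subsequence of the orbit pairs $(f^n(p),f^n(q))$ by total boundedness, pulling the resulting near-recurrence back to $p$ and $q$ via the non-contractiveness hypothesis, and closing with the triangle inequality --- is essentially the standard proof given there.
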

This theorem immediately  implies that every precompact metric space is expand-contract  plastic.

Although the definition might seem simple, in many concrete metric spaces that are not totally bounded the problem of establishing their  plasticity is unexpectedly difficult. For example, it is unknown whether the unit ball of every Banach space is  expand-contract  plastic, moreover the question remains open for such basic spaces as $c_0$, $C[0, 1]$ or $L_1[0, 1]$, more about this problem and partial positive results one can find in recent articles \cite{AnKaZa},\cite{CKOW2016},\cite{HLZ},\cite{KZ2017},\cite{KaZa},\cite{Zav},\cite{Zav2}. In \cite{AnKaZa} some generalizations of Theorem \ref{NPW} to uniform spaces were considered. Unfortunately, these generalizations are not applicable to quasi-metric spaces because uniform spaces do not generalize quasi-metric ones. The corresponding generalization of quasi-metric spaces are quasi-uniform spaces, which differ from the uniform ones by absence of the symmetry axiom (i.e. in a quasi-uniform space $(X, \mathfrak{U})$ the condition $U\in \mathfrak{U}$ does not necessarily imply $U^{-1}\in \mathfrak{U}$.)

The aim of this article is to study the possibility of generalizing Theorem \ref{NPW} to quasi-metric spaces. We demonstrate that the direct generalization does not work, but the theorem is valid under mild additional conditions.

Analogously to metric spaces, a quasi-metric space $(X,d)$ is called precompact if for every $\varepsilon>0$ there is a finite subset $Y \subset X$ such that $X\subseteq \cup_{x\in Y}B(x,\varepsilon)$. Surprisingly, this definition loses an important property of ordinary precompact spaces: a subspace of a precompact  quasi-metric space is not necessarily precompact. This motivates the following definition.

\begin{definition}
The quasi-metric space $(X, d)$ is said to be hereditarily precompact if every subspace of $(X, d)$ is precompact
\end{definition}
  Obviously, hereditary precompactness implies precompactness. Recall also another important definition.
\begin{definition}
 A sequence $\{x_n\}_{n\in\N}$ in a quasi-metric space $(X, d)$ is called a left (resp. right) K-Cauchy sequence if for any given $\varepsilon > 0$, there is an integer
$N \in\N$ such that $d(x_n, x_m) < \varepsilon$ $($resp. $d(x_m, x_n) < \varepsilon$$)$ for all $m \geq n \geq N$.
\end{definition}
Theorem 3 from \cite{KMrReilVam} states that space $(X, d)$ is hereditarily precompact  if and only if every infinite sequence of points in $X$ has a left K-Cauchy infinite subsequence.

%Further we are going to provide an example which ensures that situation with plasticity in quasi-metric spaces is different from those in metric spaces.

\section{Main results}
First of all, the statement of Theorem \ref{NPW} is false in quasi-metric spaces. In the further exposition we will use the following notations:
$$2\N=\{2k, k\in \N\},$$
$$2\N-1=\{2k-1, k\in \N\}.$$

\begin{theorem}\label{examp}
There exist a hereditarily precompact quasi-metric space  $(X, d)$ and a non-contractive mapping $F \colon X \to X$ which is not an isometry.
\end{theorem}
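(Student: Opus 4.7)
The plan is to produce an explicit triple $(X,d,F)$. The map $F$ will have to be injective, since otherwise $F(x)=F(y)$ with $x\ne y$ gives $d(F(x),F(y))=0<d(x,y)$, which violates non-contractivity. A first attempt, $X=\N$ with $d(n,m)=1/n$ for $n<m$ and $d(n,m)=1$ for $n>m$, is hereditarily precompact (every strictly increasing subsequence $n_1<n_2<\cdots$ satisfies $d(n_l,n_k)=1/n_l\to 0$, so it is left K-Cauchy), but it still satisfies the conclusion of Theorem~\ref{NPW}: non-contractivity forces any injection $F\colon X\to X$ to be strictly increasing with $F(n)\le n$, which only allows $F=\mathrm{id}$. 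The partition $\N=(2\N-1)\cup 2\N$ announced just above will be used precisely to break this rigidity.

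My plan is therefore to take $X=\N$ (or a disjoint union of two sequences indexed by the two parity classes) and define $d$ so that (i) inside each parity class, distances decay to $0$ along the forward direction, which will give hereditary precompactness, (ii) certain specific distances across the two classes are tuned asymmetrically, and (iii) an injection $F\colon X\to X$, built as a shift acting on one parity class together with a ``landing move'' onto the other class, strictly expands at least one pair and preserves all the others. After writing down $d$ and $F$ I will verify the quasi-metric axioms by a case analysis of the triangle inequality over the parities and orderings of the three points; verify hereditary precompactness by extracting, from any infinite sequence, a monotone subsequence lying inside a single parity class, on which the forward decay of $d$ supplies the left K-Cauchy property required by Theorem~3 of \cite{KMrReilVam}; verify $d(F(x),F(y))\ge d(x,y)$ by another case analysis sorted by the parities and the relative order of $x$ and $y$; and finally point to an explicit pair on which $F$ strictly expands, certifying that $F$ is not an isometry.

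The central obstacle will be choosing $F$. Two natural families are ruled out from the start: every non-contractive $F$ of finite order must be an isometry, because iterating $d(F(x),F(y))\ge d(x,y)$ through $F^k=\mathrm{id}$ forces equality throughout the orbit; and every pure shift $F(n)=n+c$ imposes a shift-monotonicity condition on $d$ incompatible with the decay of $d$ required by hereditary precompactness. Hence $F$ must be an infinite-order injection that is not a shift, and the bipartition $(2\N-1,2\N)$ is the natural device that makes such an $F$ available while leaving the quasi-metric hereditarily precompact. Once the correct asymmetric distances and the corresponding non-involutive injection $F$ are written down, the remaining verifications reduce to bookkeeping over the parity cases.
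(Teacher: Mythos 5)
There is a genuine gap: what you have written is a plan, not a proof. The entire mathematical content of this theorem is the explicit triple $(X,d,F)$, and you never write one down. The quasi-metric is described only as ``certain specific distances across the two classes are tuned asymmetrically,'' the map only as ``a shift acting on one parity class together with a landing move onto the other class,'' and every verification (triangle inequality, hereditary precompactness, non-contractivity, non-isometry) is deferred to after the construction is produced. You correctly identify that choosing $F$ is the central obstacle, but you do not overcome it, so the existence claim is not established.

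Moreover, the heuristic that is supposed to guide your construction contains a false step that steers you away from the simplest solution. You assert that every pure shift $F(n)=n+c$ is ``incompatible with the decay of $d$ required by hereditary precompactness.'' But hereditary precompactness does not require forward decay of distances: it can come from the points accumulating in a compact set. The paper's example is exactly of this kind: $X=\{e^{in}\}_{n\ge 1}$ on the unit circle, with $d(e^{in},e^{im})=|e^{in}-e^{im}|$ for same-parity $m\ge n$, the value $2$ for $n=1$, $m$ even, and $3$ otherwise; hereditary precompactness follows from total boundedness of the circle (only finitely many points of a Euclidean $\eps$-ball fail to lie in the corresponding quasi-metric ball), and $F$ is precisely the pure shift $e^{in}\mapsto e^{i(n+2)}$, which preserves all Euclidean distances by rotation invariance, sends the one pair at distance $2$ to a pair at distance $3$, and is therefore non-contractive but not an isometry. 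By ruling out pure shifts you have committed yourself to a harder construction (a non-shift injection on a forward-decaying space) whose feasibility you do not demonstrate, and which sits in tension with your own observation that decaying distances rigidify non-contractive injections. To repair the argument, drop the decay requirement, let the points accumulate on a compact set, and take $F$ to be the parity-preserving shift.
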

\begin{proof}
Take $X = \{e^{in}\}_{n=1}^{\infty}$ equipped with the following quasi-metric $d$:
\begin{equation*}
d(e^{in},e^{im})=
\begin{cases}
  |e^{in}-e^{im}| \text{ if }  m,n \in 2\N, m\geq n,\\
  \quad \quad \quad  \quad \text{ or }  m,n\in 2\N-1, m\geq n;\\
 2 \text{ if } n=1, m\in 2\N,\\
  3 \text{ otherwise}.
 \end{cases}
\end{equation*}
Let us check the axioms of quasi-metric. Items (a) and (c) are obviously satisfied. It remains to verify the triangle inequality
\begin{equation}\label{triangle_in}
 d(e^{in},e^{im})\leq d(e^{in},e^{ik})+d(e^{ik},e^{im})
\end{equation}
 in all possible cases. First of all, let us note that $|e^{in}-e^{im}|\leq |e^{in}|+|e^{im}|=2$  and the triangle inequality holds for any three points of the space.

Now we are going to show the precompactness of $X$. Observe that $X=\{e^{2ki}\}_{k=1}^{\infty}\cup\{e^{(2k-1)i}\}_{k=1}^{\infty}$. We will construct a finite $\eps$-net for $\{e^{2ki}\}_{k=1}^{\infty}$. The procedure of constructing for $\{e^{(2k-1)i}\}_{k=1}^{\infty}$  is the same. The union of these two $\eps$-nets will be a finite $\eps$-net for $X$.

 Obviously, the set $\{e^{2ki}\}_{k=1}^{\infty}$ with the usual metric $\rho(x,y) = |x - y|$  is precompact. So, in the usual metric for every $\eps>0$ there is a finite $\eps$-net  $\{e^{2k_ni}\}_{n=1}^N$. Let us consider the usual metric balls $B_\rho(e^{2k_ni},\eps)$. There are points in $B_\rho(e^{2k_ni},\eps)$ which belong to the corresponding quasi-metric balls $B(e^{2k_ni},\eps)$  and there are points which do not. Those points $e^{2mi} \in B_\rho(e^{2k_ni},\eps)$ that do not belong to  $B(e^{2k_ni},\eps)$ must satisfy the condition $2m < 2k_n$, so there are only finitely many of them.
Consequently, there are only finitely many points in
$$
\bigcup_{n=1}^N B_\rho(e^{2k_ni},\eps)\setminus \bigcup_{n=1}^N  B(e^{2k_ni},\eps).
$$
All these points together with $\{e^{2k_ni}\}_{n=1}^N$ will serve as a finite $\eps$-net for $\{e^{2ki}\}_{k=1}^{\infty}$.
 In the same way one may construct a finite $\eps$-net for any subspace of $X$, so $X$ is hereditarily precompact.

The required $F \colon X \to X$ will be the following shift mapping: $F(e^{in})=F(e^{i(n+2)})$.
Let us show that $F$ is non-contractive. We will consider three cases.
\begin{enumerate}
  \item If $d(F(e^{in}),F(e^{im})) =3$ there is nothing to check, since 3 is the biggest possible distance in the space.
  \item Due to definition of $F$ there are no points in $X$ such that $d(F(e^{in}),F(e^{im})) = 2$.
  \item $d(F(e^{in}),F(e^{im}))= |F(e^{in})-F(e^{im})|$, means that $n,m\in 2\N-1$ or $n,m \in 2\N$, $m>n$. Since addition of 2 does not change the parity, we have
  \begin{align*}
  d(F(e^{in}),F(e^{im}))=d(e^{i(n+2)},e^{i(m+2)})=|e^{i(n+2)}-e^{i(m+2)}|\\
  = |e^{2i}||e^{in}-e^{im}|= |e^{in}-e^{im}|=d(e^{in},e^{im}).
  \end{align*}
\end{enumerate}
Finally,  let us demonstrate that $F$ is not an isometry.
$$d(F(e^{i}),F(e^{2i}))=d(e^{3i},e^{4i})=3>2=d(e^{i},e^{2i}).$$
\end{proof}

Further we are going to  present some additional conditions which can save the quasi-metric version of Theorem \ref{NPW}.

\begin{theorem}
Let $(X,d)$ and $(X,d^{-1})$ be hereditarily precompact quasi-metric spaces. Let $F\colon X\to X$ be a function. If there are points $p$ and $q$  in $X$ such that $d(F(p),F(q))>d(p,q)$ then there are points $r, s\in X$ such that $d(F(r),F(s))<d(r,s)$.
\end{theorem}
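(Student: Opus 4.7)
\medskip

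The plan is to argue by contradiction and mimic the classical totally-bounded argument, but being very careful about the direction of the triangle inequality, which is where the two hereditary-precompactness hypotheses (on $(X,d)$ and on $(X,d^{-1})$) both get used.

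First I would assume the contrary: $F$ is non-contractive, meaning $d(F(x),F(y))\ge d(x,y)$ for all $x,y\in X$. Set $\delta:=d(F(p),F(q))-d(p,q)>0$, and define the orbits $p_0=p$, $q_0=q$, $p_{n+1}=F(p_n)$, $q_{n+1}=F(q_n)$. Two monotonicity facts follow immediately from non-contractivity:
\begin{itemize}
\item For any $a,b\ge 0$ and $c\ge 0$, $d(p_{a+c},p_{b+c})\ge d(p_a,p_b)$ and analogously for the $q$-orbit.
\item $d(p_n,q_n)$ is non-decreasing; since $d(p_1,q_1)\ge d(p,q)+\delta$, one gets $d(p_n,q_n)\ge d(p,q)+\delta$ for every $n\ge 1$.
\end{itemize}

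Next I would use the criterion from \cite{KMrReilVam} (Theorem 3) twice. Hereditary precompactness of $(X,d)$ lets me pass to a subsequence of $(p_n)$ that is left K-Cauchy in $d$; hereditary precompactness of $(X,d^{-1})$ lets me refine it to one that is also left K-Cauchy in $d^{-1}$, i.e.\ right K-Cauchy in $d$; hence both-sided K-Cauchy in $d$. Applying the same procedure to the $q$-orbit along this subsequence, I obtain a common subsequence of indices $m_1<m_2<\dots$ along which $(p_{m_k})$ and $(q_{m_k})$ are each both left and right K-Cauchy with respect to $d$.

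Now fix $\varepsilon=\delta/3$ and choose $K$ such that for all $l>k\ge K$ all four quantities
$d(p_{m_k},p_{m_l}),\ d(p_{m_l},p_{m_k}),\ d(q_{m_k},q_{m_l}),\ d(q_{m_l},q_{m_k})$
are strictly below $\varepsilon$. Set $N:=m_l-m_k\ge 1$. The shift-monotonicity from the first step, applied with $(a,b,c)=(0,N,m_k)$ and $(a,b,c)=(N,0,m_k)$, gives
\[
d(p,p_N)\le d(p_{m_k},p_{m_l})<\varepsilon, \qquad d(p_N,p)\le d(p_{m_l},p_{m_k})<\varepsilon,
\]
and likewise for $q$. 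Finally, the triangle inequality
\[
d(p_N,q_N)\le d(p_N,p)+d(p,q)+d(q,q_N)<d(p,q)+2\varepsilon
\]
combined with $d(p_N,q_N)\ge d(p,q)+\delta$ produces $\delta<2\delta/3$, a contradiction.

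The one delicate point, and the reason both precompactness hypotheses are needed, is that the application of the triangle inequality used to close the loop requires smallness of $d(p_N,p)$ (not just $d(p,p_N)$) and of $d(q,q_N)$: the orbit must be controllable in both directions of the quasi-metric. Hereditary precompactness of $(X,d)$ alone would only give left K-Cauchy subsequences, producing smallness of $d(p_{m_k},p_{m_l})$ with $k<l$, which via shift-monotonicity only bounds $d(p,p_N)$; the $d^{-1}$ hypothesis is what upgrades this to the reverse direction and makes the proof go through. The rest of the argument is just bookkeeping and triangle inequalities.
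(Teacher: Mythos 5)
Your proof is correct, but it takes a different route from the paper. The paper's argument is a reduction to the known metric result: assuming $F$ is non-contractive, it introduces the symmetrization $d_s(x,y)=d(x,y)+d(y,x)$, observes that $(X,d_s)$ is a precompact \emph{metric} space (every sequence has a left K-Cauchy subsequence in $d$ by Theorem 3 of \cite{KMrReilVam}, which can be refined to one that is also left K-Cauchy in $d^{-1}$, hence $d_s$-Cauchy), notes that $F$ is $d_s$-non-contractive, and then invokes Corollary 1.2 of \cite{NaiPioWing} to conclude that $F$ is a $d_s$-isometry; the identity $d(F(p),F(q))+d(F(q),F(p))=d(p,q)+d(q,p)$ together with $d(F(p),F(q))>d(p,q)$ then forces $d(F(q),F(p))<d(q,p)$, the desired contradiction. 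You instead unwind the Naimpally--Piotrowski--Wingler argument and run it directly in the quasi-metric setting: orbits $p_n=F^n(p)$, $q_n=F^n(q)$, shift-monotonicity, extraction of a common subsequence that is simultaneously left and right K-Cauchy in $d$ (this is where both hereditary precompactness hypotheses enter, exactly as in the paper), and a final triangle inequality $d(p_N,q_N)\le d(p_N,p)+d(p,q)+d(q,q_N)$ whose direction you correctly arrange so that the two quantities you need small, $d(p_N,p)$ and $d(q,q_N)$, are precisely the ones controlled by the right and left K-Cauchy properties respectively. I checked the details (the induction giving $d(p_{a+c},p_{b+c})\ge d(p_a,p_b)$, the persistence of the K-Cauchy property under passing to subsequences, and the choice $\varepsilon=\delta/3$) and they are sound. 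What your version buys is self-containedness: it does not use the NPW corollary as a black box and makes visible exactly where the asymmetry would break a naive argument. What the paper's version buys is brevity and a cleaner conceptual statement: the symmetrized space is precompact metric, so the classical theorem applies verbatim, and the contradiction drops out in one line.
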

\begin{proof}
Let us argue by contradiction. Suppose $d(F(x),F(y))\geq d(x,y)$ for all $x,y \in X$. Let us introduce the following  auxiliary metric:
$$ d_s(x,y)=d(x,y)+d(y,x).$$
 The corresponding metric space $(X, d_s)$ is precompact because for any sequence in $(X, d_s)$ there is a Cauchy subsequence (due to item (c) of Theorems 3 of \cite{KMrReilVam} from any sequence in $(X,d)$  one may extract a left K-Cauchy subsequence and then  extract a right K-Cauchy subsequence from this left K-Cauchy subsequence using the same item of the same theorem but for $(X,d^{-1})$). Corollary 1.2 from \cite{NaiPioWing} together with our hypothesis imply that $F$ is a $d_s$-isometry, i.e. $d_s(F(x),F(y))= d_s(x,y)$ for all $x,y\in X$. In particular,
$d_s(F(p),F(q))= d_s(p,q)$. So, we have two conditions: $d(F(p),F(q))+d(F(q),F(p))=d(p,q)+d(q,p)$ and $d(F(p),F(q))>d(p,q)$. These conditions imply the inequality $d(F(q),F(p))<d(q,p)$, which contradicts our assumption.
\end{proof}

\begin{corollary}
Let $(X,d)$ be a hereditarily precompact quasi-metric space satisfying the following condition: for every $\eps>0$ there is $\delta>0$ such that
\begin{equation}\label{convcond}
d(x, y) < \delta \Rightarrow d(y, x)< \eps
\end{equation}
for any $x,y \in X$. Let $F\colon X\to X$ be a function. Then the existence of points $p, q \in X$ with $d(F(p),F(q))>d(p,q)$ implies the existence of  $r, s\in X$ with $d(F(r),F(s))<d(r,s)$.
\end{corollary}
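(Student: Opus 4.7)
The plan is to reduce the corollary to the previous theorem. Since the previous theorem already gives the desired conclusion whenever both $(X,d)$ and $(X,d^{-1})$ are hereditarily precompact, it suffices to show that condition (\ref{convcond}) upgrades the hereditary precompactness of $(X,d)$ to that of $(X,d^{-1})$.

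To do this I would invoke the characterization from \cite[Theorem~3]{KMrReilVam}: a quasi-metric space is hereditarily precompact if and only if every sequence admits a left K-Cauchy subsequence. Applied to $(X,d^{-1})$, this means I need to produce, from an arbitrary sequence $\{x_n\}\subset X$, a subsequence that is left K-Cauchy with respect to $d^{-1}$, i.e.\ right K-Cauchy with respect to $d$. First I would use hereditary precompactness of $(X,d)$ together with the same characterization to extract a subsequence $\{x_{n_k}\}$ which is left K-Cauchy in $(X,d)$. Then I would translate (\ref{convcond}) into the statement that left K-Cauchy sequences in $(X,d)$ are automatically right K-Cauchy: given $\eps>0$, choose $\delta>0$ from (\ref{convcond}), pick $N$ with $d(x_{n_k},x_{n_\ell})<\delta$ for all $\ell\geq k\geq N$, and conclude $d(x_{n_\ell},x_{n_k})<\eps$ for those indices. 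This is exactly what is needed.

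Once $(X,d^{-1})$ is known to be hereditarily precompact, the previous theorem applies directly to $F$ and produces the required points $r,s\in X$ with $d(F(r),F(s))<d(r,s)$.

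The only subtlety — hardly an obstacle — is bookkeeping of the order of indices when passing between left and right K-Cauchy conditions and between $d$ and $d^{-1}$; the content of the argument is essentially a one-line application of (\ref{convcond}) to turn a single-sided Cauchy property into a two-sided one. Apart from that, the corollary is a clean deduction from the theorem it follows.
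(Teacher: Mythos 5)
Your proposal is correct. The overall reduction is the same as the paper's: both arguments observe that condition (\ref{convcond}) forces $(X,d^{-1})$ to be hereditarily precompact, and then invoke the preceding theorem. The difference is only in how that hereditary precompactness is verified. The paper does it with covers: a finite $\delta$-net for $(X,d)$ (or for any subspace) becomes, via (\ref{convcond}), a finite $\eps$-net for $(X,d^{-1})$, since $d(y_i,x)<\delta$ yields $d(x,y_i)<\eps$. You instead pass through the sequential characterization of \cite[Theorem~3]{KMrReilVam}, extracting a left K-Cauchy subsequence in $(X,d)$ and using (\ref{convcond}) to upgrade it to a right K-Cauchy one, i.e.\ a left K-Cauchy sequence for $d^{-1}$. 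Both routes are sound; yours has the minor advantage that the sequential criterion delivers \emph{hereditary} precompactness in one stroke (the paper's net argument must implicitly be repeated on every subspace, which works because (\ref{convcond}) is inherited by subspaces), while the paper's is a one-line transfer of nets that avoids any index bookkeeping. Your handling of the left/right order of indices is correct: $d(x_{n_k},x_{n_\ell})<\delta$ for $\ell\geq k\geq N$ gives $d(x_{n_\ell},x_{n_k})<\eps$, which is exactly the left K-Cauchy condition for $d^{-1}$.
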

\begin{proof}
The existence of $\eps$-net in $(X,d)$ for every $\eps>0$ and condition \ref{convcond} implies the existence of $\eps$-net in $(X,d^{-1})$ for every $\eps>0$. So $(X,d^{-1})$ is hereditarily precompact as well and it remains to apply the previous theorem to finish the proof.
\end{proof}

\begin{corollary}
Let $(X,d)$ be a hereditarily precompact quasi-metric space. Let there is a constant $C>0$, such that $d(x,y)\leq Cd(y,x)$ for all $x,y \in X$. Let $f \colon X\to X$ be a function. If there are points $p$ and $q$ such that $d(f(p),f(q))>d(p,q)$ then there are points $r$ and $s$ such that $d(f(r),f(s))<d(r,s)$.
\end{corollary}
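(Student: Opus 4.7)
The plan is to reduce this corollary to the previous one by verifying that the bilipschitz-type comparison $d(x,y)\leq C\, d(y,x)$ implies the uniform symmetry condition (\ref{convcond}). Everything else has already been done.

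First I would observe that the hypothesis is symmetric in its own right: applying $d(x,y)\leq C\, d(y,x)$ with the roles of $x$ and $y$ swapped gives $d(y,x)\leq C\, d(x,y)$ for all $x,y\in X$. Thus given $\eps>0$, setting $\delta:=\eps/C$ yields the implication
$$d(x,y)<\delta \;\Longrightarrow\; d(y,x)\leq C\,d(x,y)<C\delta=\eps,$$
which is exactly condition (\ref{convcond}).

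With (\ref{convcond}) in hand, the hypotheses of the preceding corollary are satisfied: $(X,d)$ is hereditarily precompact by assumption, and the uniform symmetry condition holds. Applying that corollary directly to $f$ produces the required points $r,s\in X$ with $d(f(r),f(s))<d(r,s)$.

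There is no real obstacle: the entire content is the one-line observation that a two-sided multiplicative comparison between $d(x,y)$ and $d(y,x)$ trivially gives the $\eps$--$\delta$ symmetry condition needed to invoke the previous corollary. The only thing to be careful about is to use the hypothesis in both orientations so that the implication goes in the right direction.
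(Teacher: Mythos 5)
Your proposal is correct and is essentially identical to the paper's own proof: both derive condition (\ref{convcond}) with $\delta=\eps/C$ via the chain $d(x,y)<\delta \Rightarrow d(y,x)\leq Cd(x,y)<C\delta=\eps$ and then invoke the preceding corollary. Your explicit remark that the hypothesis must be used with the roles of $x$ and $y$ swapped is a small but welcome clarification that the paper leaves implicit.
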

\begin{proof}
For every $\eps>0$ there exists $\delta=\frac{\eps}{C}$ such that
$$d(x,y)< \delta \Rightarrow d(y,x)\leq Cd(x,y)<C\delta=\eps.$$
\end{proof}

We tried to find an analogue of {example from the proof of Theorem \ref{examp}} with a bijective function $F$, but surprisingly failed. So, the following question remains open.

Question. Is it true that for every precompact quasi-metric space $(X, d)$ every {\em bijective} non-contractive mapping  $F\colon X\to X$ is an isometry?

\end{document}